\newtheorem{thm}{Theorem}
\newtheorem{cor}[thm]{Corollary}
\newtheorem{defi}[thm]{Definition}
\newtheorem{remark}[thm]{Remark}
\newtheorem{example}[thm]{Example}
\newtheorem{conj}[thm]{Conjecture}
\newenvironment{rk}{\begin{remark}\rm}{\end{remark}}
\numberwithin{equation}{section}
\newcommand{\nat}{{\mathbb N}}
\newcommand{\C}{{\mathcal C}}
\newcommand{\N}{{\mathbb N}}
\newcommand{\wt}{\widetilde}
\newcommand{\n}{\noindent}
\newcommand{\be}{\begin{align*}}
\newcommand{\ee}{\end{align*}}
\newcommand{\beq}{\begin{equation}}
\newcommand{\eeq}{\end{equation}}
\newcommand{\beqn}{\begin{equation*}}
\newcommand{\eeqn}{\end{equation*}}
\begin{document}

\title[Finite de Finetti theorems for free easy quantum groups ]{Finite de Finetti theorems for free easy quantum groups }


\author[Jianquan Wang]{Jianquan Wang}
\address{Institute for Advanced Study in Mathematics, Harbin Institute of Technology,  Harbin 150001, China}
\email{wjquan@stu.hit.edu.cn}

\date{}
\maketitle

 \begin{abstract}
We prove various finite de Finetti theorems for non-commutative distributions which are invariant under the free easy quantum group actions. This complements the free de Finetti theorems by Banica, Curran and Speicher, which mostly focus on infinite sequences. We also discuss some refined results for the infinite setting. 
 \end{abstract}


\section{Introduction}\label{Introduction}


 Symmetries can be characterized by the invariance under the actions of specific families,
 yielding a series of results and tools.
 One of the phenomena studied on a sequence $(X_1, X_2,\ldots)$ of random variables is the de Finetti theorem \cite{bk5}, which illustrates the link between symmetries and independence. It demonstrates that the distribution of $(X_1,\ldots,X_n)$ is identical to that of  $(X_{\pi(1)},\ldots,X_{\pi(n)})$ for all $n\geq 1$ and all permutation $\pi$ on $[n]=\{1,\ldots,n\}$ if and only if $X_i$, $i\in\N$, are conditionally independent and identically distributed with respect to the tail algebra.
 An extension by Freedman \cite{ref10} established a de Finetti theorem for rotatable sequences of random variables (i.e. the joint distribution of the random variables is invariant under the action of orthogonal group). 
 
 In the setting of finite sequences,
 Diaconis \cite{ref16} proved two finite forms of de Finetti theorem for exchangeable random variables with (0,1)-distribution through geometric methods. 
 Diaconis and Freedman \cite{ref17} extended the theorem to a general form, obtaining a finite de Finetti theorem for exchangeable random variables taking values in a finite set. These results indicate that
 the joint distribution of a finite exchangeable sequence approximates quantitatively a  finite conditionally independent identically distributed sequence.
 
 Free probability studies noncommutative distributions arising from random matrices and operator algebras. In this context, it is natural to consider quantum symmetries instead of the aforementioned classical ones, which can be characterized by the invariance under the actions of quantum groups. The easy quantum groups, constructed using Woronowicz's Tannaka-Kre\u\i n theorem via
 combinatorial data, were introduced by Banica and Speicher in \cite{ref2}. Moreover, free easy quantum groups are the corresponding quantum groups associated with the categories of non-crossing partitions. Köstler 
 and Speicher \cite{ref7} initiated the study of  de Finetti theorems in the context of  free probability theory, showing that if the non-commutative distribution of a sequence $(x_i)_{i\in\mathbb N}$ is invariant under the quantum permutation group action, 
 then the sequence exhibits operator-valued free independence.
 This theory was later
 largely extended for all free easy quantum groups by Banica, Curran and Speicher \cite{ref5}; see also \cite{ref19,ref20,ref21,ref22,ref23,ref24} for related results.  
 
 However, very few free de Finetti theorems for quantum group actions on finite
 sequences are known so far. The only isolated result was given in the aforementioned work \cite[Theorem 4.8]{ref5}, where a finite approximation of operator-valued moments
 was provided. 
 Recently, 
Baraquin et al. \cite{ref4} established a finite de Finetti theorem for Voiculescu's unitary dual group, offering a complete characterization of the associated invariant distributions by using various vanishing properties of cumulants. This inspired us to investigate a more comprehensive finite de Finetti theorem for
free easy quantum groups in a similar spirit.

In this paper, we establish a finite de Finetti theorem for free easy quantum groups  using Weingarten calculus and moment-cumulant formula. Let $G$ be a free easy quantum group and let $\mathcal C$ be the category of non-crossing partitions associated with $G$. Then, 
for a $\ast$-probability space $(A, \varphi)$ and a sequence $(x_1, \ldots , x_n)$ in $A$ on which $G$ acts, we prove that
the following are equivalent (see Theorem \ref{th1}):
\begin{enumerate}[\rm(i)]
	\item The sequence $(x_1, \ldots, x_n)$ is $G$-invariant. 
	\item  For each $m>0$, there are scalars $(c_\pi^{(m)})_{\pi\in \mathcal C(m)}\subset\mathbb C$ such that, for all $\bm{i}\in [n]^m$,
	\begin{equation*}
		\varphi(x_{i_1}\ldots x_{i_m})=
		\sum\limits_{\substack{\pi\in\mathcal C(m)\\ \pi\leq \ker(\bm{i})}}c_\pi^{(m)}. 
	\end{equation*}
	\item For each $m>0$, there are scalars $(C_\pi^{(m)})_{\pi\in \mathcal C(m)}\subset\mathbb C$ such that, for all $\bm{i}\in [n]^m$,
	\begin{equation*}
		\kappa_{m}(x_{i_1},\ldots ,x_{i_m})=
		\sum\limits_{\substack{\pi\in\mathcal C(m)\\ \pi\leq \ker(\bm{i})}}C_\pi^{(m)}.
	\end{equation*}
\end{enumerate}
The distinguished scalars $c_\pi^{(m)}$ and $C_\pi^{(m)}$, $\pi\in\mathcal C(m)$, can be reversely determined by the moments and Weingarten functions. In view of the free moment-cumulant formulas, we regard
 Assertion (ii) as a weak version of the free independence, which will be explained in Section \ref{sect3}. The last assertion provides a description similar  to the vanishing property of cumulants in \cite[Theorem 4.2]{ref4}. 

This paper is organized as follows. Section \ref{sect2} provides a review of the notations and definitions used throughout the paper. In Section \ref{sect3}, 
we prove our finite de Finetti theorem for free easy quantum groups. As a corollary, we consider the asymptotic freeness of sequences whose distribution is invariant under the action of $O_n^+$ and $S_n^+$. 
Section \ref{sect4} extends our investigation to an infinite setting, following from Theorem \ref{th1}, as shown in the proof of Theorem \ref{th2}. Combining \cite[Theorem 5.3]{ref5} with Theorem \ref{th2}, we can use free cumulants to characterize operator-valued freeness for an infinite sequence, as shown in Corollary \ref{coro3}. Additionally, our arguments can be extended to a unitary version, as explained in Remark \ref{rk3}.


\section{Preliminaries}\label{sect2}


In this section, we briefly review some relevant theories and notation about free probability theory and free easy quantum groups. 
We refer to \cite{bk1} for more on free probability and to \cite{ref6} for additional information on quantum groups.

For $m\in \mathbb N$, we denote by $[m]$ the set $\{1,2,\dots,m\}$.
We call $\pi$ a \textbf{partition of} ${[m]}$ if $\pi=\{V_i\}_{i\in I}$ is a collection of subsets of $[m]$, such that $\bigcup_{i\in I} V_i=[m]$ and $V_i\cap V_j=\varnothing$ for $i\neq j$. We call $V_i$ $(i\in I)$ the \textbf{blocks} of $\pi$ and we denote by $\# \pi$ the number of blocks in $\pi$.  If there do not exist integers $1\leq i<j<k<l\leq m$ and blocks $U,V\in\pi$ such that $U\neq V$, $i,k\in U$ and $ j,l\in V$, then  $\pi$ is said to be \textbf{non-crossing}.  We denote by $P(m)$ all partitions of $[m]$ and by $\operatorname{NC}(m)$ all non-crossing partitions of $[m]$. For $\pi,\sigma\in P(m)$, we write $\pi\leq \sigma$ if each block of $\pi$ is contained in some block of $\sigma$. Given a sequence of integers  $\bm{i}=(i_1,\ldots,i_m)$,  we denote $\pi|_V=\{U\cap V: U\in \pi \}$ and $\bm{i}|_V=(i_{j_1},\ldots,i_{j_l})$ for $V=( j_1,\ldots,j_l)\subset [m]$. We consider $\bm{i}$ as a function from $[m]$ to $[n]$. Let $R(\bm{i})$ be the range of $\bm{i}$, and let $V_{a}=\bm{i}^{-1}(a)$ for all $a\in [n]$.
We denote by $\ker(\bm{i})$ the partition $\{V_a:a\in R(\bm{i})\}$. 

Let $A$ be a $\ast$-algebra and let $\varphi$ be a state on $A$, namely, $\varphi$ is a positive linear functional on $A$ with $\varphi(\bm{1})=1$. Then we call $(A,\varphi)$ a $\bm{\ast}$\textbf{-probability space}. The elements in $A$ are called \textbf{non-commutative random variables}.  We denote by $\kappa_m:A^m \rightarrow \mathbb C$ the \textbf{free cumulant}, which is a multi-linear functional determined by the {moment-cumulant formula} (see, e.g., \cite[Chapter 2]{bk1}):
\begin{equation}
	\label{Emc}
	\varphi(x_{i_1}\cdots x_{i_{m}})=\sum\limits_{\pi\in \operatorname{NC}(m)}\kappa_\pi(x_{i_1},\dots, x_{i_{m}}),
\end{equation}
where $\kappa_\pi : A^m\rightarrow \mathbb C$ is a multi-linear functional defined as
\begin{equation}\label{Emc2}
	\kappa_\pi(x_{i_1}\cdots x_{i_{m}})=\prod_{\substack{V\in\pi\\ V=(j_1,\ldots,j_l)}}\kappa_{l}(x_{i_{j_1}},\ldots,x_{i_{j_l}}).
\end{equation}
We will also use frequently the following fact for Möbius functions (see e.g. \cite[Proposition 10.11]{bk4}).
Let $\mathcal P$ be a finite poset, and let $\mu_{\mathcal P}$ be the Möbius function on $\mathcal P$. For two functions $f,g:\mathcal P\rightarrow\mathbb C$, the relation 
\begin{equation}\label{eqi1}
	f(\pi)=\sum_{\sigma\in \mathcal P,\sigma\leq\pi}g(\sigma),\ \forall \pi\in \mathcal P
\end{equation} is equivalent to
\begin{equation}\label{eqi2}
	g(\pi)=\sum_{\sigma\in \mathcal P,\sigma\leq\pi}f(\sigma)\mu_{\mathcal P}(\sigma,\pi).
\end{equation}

By the equivalence between \eqref{eqi1} and \eqref{eqi2}, we obtain the cumulant-moment formula
\begin{equation}
	\label{Ecm}
	\kappa_m(x_{i_1},\dots, x_{i_{m}})=\sum\limits_{\pi\in \operatorname{NC}(m)}\varphi_\pi(x_{i_1},\dots, x_{i_{m}})\mu_{P(m)}(\pi,1_m),
\end{equation}
where $1_m:=\{\{1,\dots, m\}\}\in P(m)$, $\mu_{P(m)}$ is the Möbius function on $P(m)$ and $\varphi_\pi : A^m\rightarrow \mathbb C$ is a multi-linear functional defined as
\begin{equation*}
	\varphi_\pi(x_{i_1},\dots ,x_{i_{m}})=\prod_{\substack{V\in\pi\\ V=(j_1,\ldots,j_l)}}\varphi(x_{i_{j_1}}\cdots x_{i_{j_l}}).
\end{equation*}
We recall below some basic notions of free easy quantum groups which can be found in \cite[Definiton 1.1, Definiton 3.5]{ref2} and \cite[Section 1]{ref13}.
For $n>0$, let $A_n$ be a unital $C^*$-algebra generated by $n^2$ self-adjoint elements $u_{ij}$ $(1\leq i,j\leq n)$. The pair  $(A_n,u)$ is called  a \textbf{quantum orthogonal group} if for all $1\leq i,j\leq n$,
$$\sum_{k=1}^{n}u_{ik}u_{jk}=\delta_{ij}=\sum_{k=1}^nu_{ki}u_{kj}.$$
We denote $G_n=(A_n,u)$ and $A_n=C(G_n)$.
Let $\Delta: A_n \rightarrow A_n\otimes A_n$ denote the unital $\ast$-algebra homomorphism  defined by 
$$
\Delta(u_{ij})=\sum_{k=1}^n u_{ik}\otimes u_{kj}.
$$
It can be shown that
$(\Delta\otimes \operatorname{id})\circ\Delta= (\operatorname{id}\otimes\Delta )\circ\Delta$. We call $\Delta$ the \textbf{comultiplication} of $G_n$.

Recall that, by Woronowicz's Tannakain duality theorem \cite{ref12}, a quantum orthogonal group $G_n$ can be reconstructed from the intertwiner spaces $C_{G_n}$, where
$$C_{G_n}(k,l)=\qty{T: (\mathbb C^n)^{\otimes k}\rightarrow (\mathbb C^n)^{\otimes l}: T \text{ is linear and } Tu^{\otimes k}= u^{\otimes l}T },$$
where $u$ is the fundamental representation of $A_n$, namely $u$ is the $n\times n$-matrix with entries $u_{ij}$ ($1\leq i,j\leq n$). 
We denote by $P$ the category of all partitions.  For each $k,l>0$, we draw $k$ points in an upper row and $l$ points in a lower row, and we denote by $P(k,l)$ the set consists of all possible line matchings between these points. Let $\bm{i}\in [n]^k$ and $\bm{j}\in [n]^l$ be two sequences of indices associated with  the upper and lower points, respectively. For $p\in P(k,l)$, we set $\delta_{p}(\bm{i},\bm{j})=1$ if whenever two points are connected by $p$,  the indices with respect to the points are equal. Otherwise, we set $\delta_{p}(\bm{i},\bm{j})=0$.  
Let $e_i$, $1\leq i\leq n$, be the canonical basis of $\mathbb C^n$. We denote by $T_p$ the linear map from $\qty(\mathbb C^n)^{\otimes k}$ to $\qty(\mathbb C^n)^{\otimes l}$ defined by
\begin{equation}\label{EqTP}
	T_p(e_{i_1}\otimes\cdots\otimes e_{i_k})=\sum_{j_1,\ldots,j_l=1}^n\delta_p(\bm{i},\bm{j})e_{j_1}\otimes\cdots\otimes e_{j_l}.
\end{equation}
Let $G_n$ be a
 quantum orthogonal group. We call $G_n$ an \textbf{easy quantum group} if its intertwiner space is of the form 
$$C_{G_n}=\operatorname{span}\qty{T_p: p\in P_{G_n} \text{, where } P_{G_n}\subset P \text{ is a category of partitions}}.$$
And $G_n$ is called \textbf{free} if $P_{G_n}$ is a category consisting of some non-crossing partitions. Such categories and free easy quantum groups have been fully classified in \cite{ref13}. These quantum groups are commonly denoted by $O_n^+$, $H_n^+$, $B_n^+$, $S_n^+$, $S_n^{\prime+}$, $ B_n^{\prime+}$ and $B_n^{\#}$.  
 In the remainder of the paper,  the quantum group $G_n$ will always refer to a free easy quantum group and we denote by $\mathcal C$ the category of non-crossing partitions associated with $G_n$. Furthermore, for $m>0$, we set $\mathcal C(m)=\mathcal C(m,0)$.

Let $(A,\varphi)$ be a $\ast$-probability space generated by $n$ self-adjoint elements $(x_1, \ldots, x_n)$.  
We denote by $\alpha_n: A\rightarrow  A\otimes  C(G_n)$ the unique unital homomorphism  defined as
\begin{equation*}\label{eqac}
	\alpha_n(x_i)=\sum_{j=1}^n x_j\otimes u_{ji},
\end{equation*}
for $1\leq i\leq n$.
Then $\alpha_n$ is a {coaction} of $G_n$. And we say that \textbf{the distribution
	${\varphi}$ is invariant under ${G_n}$}, or that the sequence
$(x_1, . . ., x_n)$ is \textbf{$G_n$-invariant}, if $\varphi$ is invariant under the coaction $\alpha_n$, namely, for each $x\in A$,
$$(\varphi\otimes \text{id})\circ \alpha_n(x)=\varphi(x) \bm{1}_{C(G_n)}.$$
We refer the reader to \cite{ref5} for more information.

The Haar state $h_n$ with respect to $G_n$ is the unique state on $C(G_n)$ such that for any $a\in C(G_n)$, $$(h_n\otimes \operatorname{id})\circ \Delta(a)=(\operatorname{id}\otimes h_n)\circ\Delta (a)=h_n(a)\bm{1}_{C(G_n)}.$$ 
The law of the Haar state can be computed using the Weingarten formula. 
For $m\in \mathbb N$ , let $G_{m,n}^{\mathcal C}$ be the Gram matrix with entries indexed by the non-crossing partitions in $\mathcal C(m)$ defined by 
\begin{equation*}\label{eq12}
	G_{m,n}^{\mathcal C}(\pi,\sigma)=n^{\#({\pi\vee\sigma})} ,
\end{equation*}
where $\pi\vee\sigma$ is the smallest partition such that $\pi\leq \pi\vee\sigma$ and $\sigma\leq \pi\vee\sigma$ with respect to the
usual partial order in the lattice of partitions  $P(m)$. The Weingarten matrix $\operatorname{Wg}^{\mathcal C}_{m,n}$, if it exists, is defined as its inverse. The Haar state $h_n$ on $G_n$ is determined by the Weingarten formula:
\begin{equation}
	\label{Ewg}
	h_n(u_{i_1j_1}\cdots u_{i_mj_m})=
	\sum\limits_{\pi,\sigma\in{\mathcal C}(m)} \delta_{\pi}(\bm{i})\delta_{\sigma}(\bm{j})\text{Wg}^{\mathcal C}_{m,n}(\pi,\sigma).
\end{equation}
Banica and Collins proved that for $n\geq 2$, the Gram matrix with respect to $O_n^+$ is invertible. Furthermore, for $n\geq4$, the Gram matrix with respect to any categories of non-crossing partitions is invertible. We refer the reader to \cite{ref14, bk3} for more information.


\section{Finite de Finetti theorem for easy quantum groups}\label{sect3}


In this section, we present our finite de Finetti theorem for free easy quantum groups.
Our argument is motivated by the following observations:  
Let $A$ be a $\ast$-probability space, and let $ B$ be a unital $\ast$-subalgebra of $A$ with a $\varphi$-preserving conditional expectation $E$ onto $B$. The triple $(A,E,B)$ is called an \textbf{operator-valued non-commutative probability space}. We refer to \cite[Chapter 9]{bk1} for details.
The operator-valued free cumulant $\kappa_m^E:A^{m}\rightarrow {B}$ is determined by the operator-valued moment-cumulant formula :
\begin{equation}\label{Eopmc}
	E(b_0x_{i_1}b_1\cdots x_{i_m}b_m)=\sum_{\pi\in \operatorname{NC}(m)}\kappa^E_\pi(b_0x_{i_1}b_1,\cdots,x_{i_m}b_m).
\end{equation}

It is well known that the non-commutative random variables $(x_1,x_2,\dots,x_n)$ is a free semi-circular family with amalgamation over $B$  if and only if for all $m>0 $, $\pi\in\operatorname{NC}(m)$ and $b_i\in B$, $i=0,1,\ldots,m$,
$$\kappa^E_{\pi}[b_0x_{i_1}b_1,\ldots,x_{i_m}b_m]=0$$
except when $\pi\leq\ker{\bm{i}}$ and $\pi\in \operatorname{NC}_2(m)$  (the set of all non-crossing pairings of $[m]$). We denote $\operatorname{NC}_{2,\leq}(\bm{i})=\{\pi\in \operatorname{NC}_{2}(m): \pi\leq \ker(\bm{i})\}$.
Then by the operator-valued moment-cumulant formula (\ref{Eopmc}), we obtain 
\begin{equation}\label{eq10}
	\begin{aligned}
		E\left[x_{i_1}\cdots x_{j_m}\right]&=\sum_{\pi\in\operatorname{NC}_{2,\leq}(\bm{i})}\kappa^E_{\pi}(x_{i_1},\dots,x_{i_m})\\
		&=\sum_{\pi\in\operatorname{NC}_{2,\leq}(\bm{i})}\kappa^E_{\pi}(x_{1},\dots,x_{1}).
	\end{aligned}
\end{equation}
 Applying $\varphi$ to both sides of Eq. (\ref{eq10}), it follows that
\begin{equation*}
	\label{eq6}
	\varphi(x_{i_1}\cdots x_{i_m})=
	\sum\limits_{\pi\in\operatorname{NC}_{2,\leq}(\bm{i})}\varphi\left(\kappa^E_{\pi}(x_{1},\dots,x_{1})\right).
\end{equation*}
Then we have the following expression:
\begin{equation}
	\label{eq9}
	\varphi(x_{i_1}\cdots x_{i_m})=
	\sum\limits_{\pi\in\operatorname{NC}_{2,\leq}(\bm{i})}c^{(m)}_\pi,
\end{equation}
where $c^{(m)}_\pi=\varphi(\kappa^E_{\pi}(x_{1},\dots,x_{1}))$. Similar observations hold for operator-valued free independence with respect to other free easy quantum groups studied in \cite{ref5}. For $\bm{i}\in[n]^m$, we denote the set $\mathcal C_\leq(\bm{i}):=\{\pi\in \mathcal C(m): \pi\leq \ker(\bm{i})\}$.
The relations such as (\ref{eq9}) effect a much weaker independence than the freeness, and we will show that these exactly characterize the $G_n$-invariance. We may investigate further descriptions in terms of cumulants, inspired by the approach taken in \cite{ref4}.
More precisely, the following is our finite de Finetti theorem for free easy quantum groups.
\begin{thm}
	\label{th1}
	Given $n\geq 4$, let $(x_1, . . . , x_n)$ be a family of random variables in a $\ast$-probability space $(A, \varphi)$. Let $G_n$ be a free easy quantum group with the associated category of non-crossing partitions $\mathcal C$.
	Then the following are equivalent:
	\begin{enumerate}[\rm(i)]
		\item \label{th1-As1}The sequence $(x_1, . . ., x_n)$ is $G_n$-invariant. 
		\item \label{th1-As2}
		There exists a unital subalgebra $B$ of $A$ and a $\varphi$-preserving conditional expectation $E:A\rightarrow B$  such that, for any $m>0$,
		there is a family $(b_\pi)_{\pi\in \mathcal C(m)}\subset B$ satisfying, for all $\bm{i}\in [n]^m$,
		\begin{equation*}
			\label{eq5}
			E(x_{i_1}\ldots x_{i_m})=
			\sum\limits_{\pi\in\mathcal C_\leq(\bm{i})}b_\pi^{(m)} .
		\end{equation*}
		In particular, $E(x_{i_1}\cdots x_{i_m})$ is zero except if 
		$\mathcal C_{\leq}(\bm{i})\neq \varnothing$ and that $E(x_{i_1}\ldots x_{i_m})=E(x_{j_1}\cdots x_{j_m})$ as soon as $\ker(\bm{i})=\ker(\bm{j})$.
		\item \label{th1-As3} For any $m> 0$, there are scalars $(c_\pi^{(m)})_{\pi\in \mathcal C(m)}\subset\mathbb C$ such that, for all $\bm{i}\in [n]^m$,
		\begin{equation*}
			\label{eq1}
			\varphi(x_{i_1}\cdots x_{i_m})=
			\sum\limits_{\pi\in\mathcal C_\leq(\bm{i})}c_\pi^{(m)} .
		\end{equation*}
		In particular, the moments $\varphi(x_{i_1}\ldots x_{i_m})$ are zero except if 
		$\mathcal C_{\leq}(\bm{i})\neq \varnothing$ and 
		$$\varphi(x_{i_1}\cdots x_{i_m})=\varphi(x_{j_1}\cdots x_{j_m})$$ as soon as $\ker(\bm{i})=\ker(\bm{j})$.
		\item \label{th1-As4} For any $m>0$, there are scalars $(C_\pi^{(m)})_{\pi\in \mathcal C(m)}\subset\mathbb C$ such that, for all $\bm{i}\in [n]^m$,
		\begin{equation}\label{eq7}
			\kappa_{m}(x_{i_1},\ldots ,x_{i_m})=
			\sum\limits_{\pi\in\mathcal C_\leq(\bm{i})}C_\pi^{(m)}.
		\end{equation}
		In particular, the cumulants $\kappa_{m}(x_{i_1},\ldots ,x_{i_m})$ are zero except if 
		$\mathcal C_{\leq}(\bm{i})\neq \varnothing$ and $$\kappa_m(x_{i_1},\ldots ,x_{i_m})=\kappa_m(x_{j_1},\ldots, x_{j_m})$$ as soon as $\ker(\bm{i})=\ker(\bm{j})$.
	\end{enumerate}
\end{thm}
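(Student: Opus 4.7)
My plan is to establish the four-way equivalence via $(i)\Leftrightarrow(iii)$, the routine $(ii)\Leftrightarrow(iii)$, and a M\"obius-inversion argument for $(iii)\Leftrightarrow(iv)$. The first two rely on the Weingarten formula and the intertwiner structure of $G_n$; the last is a combinatorial manipulation of the moment-cumulant dictionary.

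For $(i)\Rightarrow(iii)$ I would apply the Haar state $h_n$ to both sides of $(\varphi\otimes\operatorname{id})\circ\alpha_n(x_{i_1}\cdots x_{i_m})=\varphi(x_{i_1}\cdots x_{i_m})\bm{1}$, which yields $\varphi(x_{i_1}\cdots x_{i_m})=\sum_{\bm{j}\in[n]^m}\varphi(x_{j_1}\cdots x_{j_m})\,h_n(u_{j_1 i_1}\cdots u_{j_m i_m})$; expanding $h_n$ via the Weingarten formula (\ref{Ewg}) and collecting terms indexed by $\sigma$ yields (iii) with the explicit scalar
\[
c_\sigma^{(m)}=\sum_{\pi\in\mathcal{C}(m)} \text{Wg}^{\mathcal{C}}_{m,n}(\pi,\sigma)\sum_{\bm{j}:\,\pi\leq\ker(\bm{j})}\varphi(x_{j_1}\cdots x_{j_m}),
\]
where the hypothesis $n\geq 4$ ensures that $\text{Wg}^{\mathcal{C}}_{m,n}$ exists. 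For the converse $(iii)\Rightarrow(i)$, the key ingredient is the intertwiner identity
\[
\sum_{\bm{j}:\,\pi\leq\ker(\bm{j})}u_{j_1 i_1}\cdots u_{j_m i_m}=\delta_\pi(\bm{i})\,\bm{1}_{C(G_n)},\qquad \pi\in\mathcal{C}(m),
\]
which is the coordinate form of $T_\pi\in C_{G_n}(m,0)$ via (\ref{EqTP}). Plugging (iii) into $(\varphi\otimes\operatorname{id})\circ\alpha_n(x_{i_1}\cdots x_{i_m})=\sum_{\bm{j}}\varphi(x_{j_1}\cdots x_{j_m})u_{j_1 i_1}\cdots u_{j_m i_m}$ and applying this identity term by term recovers $\varphi(x_{i_1}\cdots x_{i_m})\bm{1}$. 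The equivalence $(ii)\Leftrightarrow(iii)$ is then immediate, by taking $B=\mathbb{C}\bm{1}$, $E=\varphi(\cdot)\bm{1}$, and $b_\pi^{(m)}=c_\pi^{(m)}\bm{1}$ in one direction, and applying $\varphi$ to (ii) with $c_\pi^{(m)}=\varphi(b_\pi^{(m)})$ in the other; the ``in particular'' clauses follow in each case from the characterisation $\delta_\pi(\bm{i})=1\Leftrightarrow \pi\leq\ker(\bm{i})$.

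For $(iii)\Leftrightarrow(iv)$ I would first observe that under either assumption both $\varphi(x_{i_1}\cdots x_{i_m})$ and $\kappa_m(x_{i_1},\ldots,x_{i_m})$ depend only on $\ker(\bm{i})$ (the latter through (\ref{Ecm}) applied to the former), so both statements reduce to M\"obius-support conditions on $P(m)$. The direction $(iv)\Rightarrow(iii)$ I would prove by substituting (\ref{eq7}) into each factor $\kappa_{|V|}$ in the moment-cumulant formula (\ref{Emc})--(\ref{Emc2}) and reorganising the resulting double sum over pairs $(\rho,\pi)$ with $\pi\leq\rho\in\operatorname{NC}(m)$ into a single sum indexed by $\pi\in\mathcal{C}_\leq(\bm{i})$. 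The converse $(iii)\Rightarrow(iv)$ runs the same scheme through (\ref{Ecm}) and invokes the M\"obius inversion (\ref{eqi1})--(\ref{eqi2}). I expect the main obstacle to lie here: after reorganising the sums, one must verify that the surviving coefficients indeed vanish off $\mathcal{C}(m)$, and doing so cleanly appears to require the categorical closure properties of $\mathcal{C}$ (stability under horizontal concatenation and non-crossing nesting) together with a delicate M\"obius computation on $\operatorname{NC}(m)$.
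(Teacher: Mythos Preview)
Your proposal is correct and follows essentially the same route as the paper: Weingarten for $(i)\Rightarrow(iii)$, the intertwiner identity $\sum_{\bm j}\delta_\pi(\bm j)u_{j_1i_1}\cdots u_{j_mi_m}=\delta_\pi(\bm i)\bm 1$ for $(iii)\Rightarrow(i)$, and the moment--cumulant formulas together with the categorical closure of $\mathcal C$ under restriction to blocks and horizontal concatenation for $(iii)\Leftrightarrow(iv)$. The only noteworthy deviation is in how $(ii)$ is obtained: you take the trivial $B=\mathbb C\bm 1$, $E=\varphi(\cdot)\bm 1$, which is logically sufficient for the equivalence, whereas the paper instead proves $(i)\Rightarrow(ii)$ directly by setting $B$ equal to the fixed-point algebra $\mathcal P_{\bm x}^{\alpha_n}$ of the coaction and $E=(\operatorname{id}\otimes h_n)\circ\alpha_n$, then reading off $b_\pi=\sum_{\sigma}\operatorname{Wg}^{\mathcal C}_{m,n}(\pi,\sigma)\sum_{\bm j}\delta_\sigma(\bm j)x_{j_1}\cdots x_{j_m}$; this gives a richer, natural $B$ but is not needed for the bare statement. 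Your anticipated obstacle in $(iii)\Leftrightarrow(iv)$ is exactly what the paper handles, and it is milder than you suggest: no genuine M\"obius computation on $\operatorname{NC}(m)$ is required, only the bijection $\{\sigma\in\mathcal C(m):\sigma\le\pi,\sigma\le\tau\}\leftrightarrow\prod_{V\in\pi}\{\sigma_V\in\mathcal C(|V|):\sigma_V\le\tau|_V\}$, which is precisely the two closure properties you named.
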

\begin{proof} 
	`(\ref*{th1-As1}) $\Rightarrow$ (\ref*{th1-As2})': 
Let $\mathcal P_{\bm{x}}^{\alpha_n}$ be the fixed point algebra of the coaction $\alpha_n$, namely, $$\mathcal P_{\bm{x}}^{\alpha_n}=\{x\in A:\alpha_n(x)=x\otimes \bm{1}_{C(G_n)}\}.$$  It is easy to check that
\begin{equation}\label{fix-point-alg}
	\mathcal P_{\bm{x}}^{\alpha_n}=\mathbb C\left\langle \sum_{\bm{i}\in [n]^{m}} \delta_{\pi}(\bm{i})x_{i_1}\cdots x_{i_{m}}: m\in\mathbb N, \pi\in\mathcal C(m)\right\rangle.
\end{equation}
Let $E: A\rightarrow\mathcal \mathcal P_{\bm{x}}^{\alpha_n}$ be a linear map defined by 
$E=(\operatorname{id}\otimes h)\circ\alpha_n.$
Then $E$ becomes a projection onto $\mathcal P_{\bm{x}}^{\alpha_n}$. If $(x_1,\ldots,x_n)$ is $G_n$-invariant, then $E$ is the $\varphi$-preserving conditional expectation onto $\mathcal P_{\bm{x}}^{\alpha_n}$. 
{For $\bm{i}=(i_1,\ldots,i_m)\in[n]^m$, applying the Weingarten formula (\ref{Ewg}), we have}
$$ \begin{aligned}
	E\left(x_{i_1}\cdots x_{i_{m}}\right)&=
	(\operatorname{id}\otimes h)\circ\alpha_n\left(x_{i_1}\cdots x_{i_{m}}\right)\\
	&=\sum_{\bm{j}\in[n]^m}x_{j_1}\cdots x_{j_{m}} h(u_{j_1i_1}\cdots u_{j_{m}i_{m}})\\
	&=\sum_{\bm{j}\in[n]^m}x_{j_1}\cdots x_{j_{m}}\sum_{\pi,\sigma\in\mathcal C(m)}\delta_{\pi}(\bm{i})\delta_{\sigma}(\bm{j})\operatorname{Wg}^{\mathcal C}_{m,n}(\pi,\sigma)\\ 
	&= \sum_{\pi\in\mathcal C(m)}\delta_{\pi}(\bm{i})b_{\pi}
\end{aligned}$$
where 
$$b_\pi=\sum_{\sigma\in\mathcal C(m)}\operatorname{Wg}^{\mathcal C}_{m,n}(\pi,\sigma) \sum_{\bm{j}\in[n]^m}\delta_{\sigma}(\bm{j})x_{j_1}\cdots x_{j_{m}},$$
which implies Assertion (\ref*{th1-As2}).

`(\ref*{th1-As2}) $\Rightarrow$ (\ref*{th1-As3})': For $\bm{i}\in[n]^m$, we have
$$\varphi(x_{i_1}\cdots x_{i_m})=\varphi\left(E(x_{i_1}\cdots x_{i_m})\right)
=
\sum\limits_{\pi\in\mathcal C_\leq(\bm{i})}\varphi\left(b_\pi\right).$$
	
	`(\ref*{th1-As3}) $\Rightarrow$ (\ref*{th1-As1})':
	For all $\pi \in \mathcal C(m)$, using Eq. (\ref{EqTP}), we have  
	$$\sum_{\bm{j}\in[n]^{m}}\delta_{\pi}(\bm{j})u_{j_1i_1}\cdots u_{j_{m}i_{m}}=\delta_{\pi}(\bm{i})\bm{1}_{C(G_n)}.$$
	Thus
	$$\begin{aligned}
		(\varphi\otimes \operatorname{id})\circ\alpha_n(x_{i_1}\cdots x_{i_m})&=\sum_{\bm{j}\in [n]^m}\varphi (x_{j_1}\cdots x_{j_m})u_{j_1i_1}\cdots u_{j_{m}i_{m}}\\
		&=\sum_{\bm{j}\in [n]^m}\sum_{\pi\in\mathcal C(m)} c_\pi^{(m)} \delta_\pi(\bm{j})u_{j_1i_1}\cdots u_{j_{m}i_{m}}\\
		&=\sum_{\pi\in\mathcal C(m)} c_\pi^{(m)} \delta_\pi(\bm{i}) \bm{1}_{C(G_n)}\\
		&=\varphi(x_{i_1}\cdots x_{i_m})\bm{1}_{C(G_n)}.
	\end{aligned}$$
	
	`(\ref*{th1-As4}) $\Rightarrow$ (\ref*{th1-As3})': Fix $\bm{i}\in[n]^m$. 
	For all $V=( k_1,\ldots,k_l)\subset [n]$, the value of
	  $\kappa_{l}(x_{i_{k_1}},\ldots,x_{i_{k_l}})$ only depends on $\ker(\bm{i}|_V)$. It is straightforward to verify that $\ker(\bm{i}|_V)=\ker(\bm{i})|_V$. 
	Therefore, by the moment-cumulant formula (\ref{Emc}) and Eq. (\ref{eq7}), for $\bm{j}\in [n]^m$, we have 
	$$\begin{aligned}
		\varphi(x_{j_1} \cdots x_{j_m})=\varphi(x_{i_1} \cdots x_{i_m})
		,
	\end{aligned}$$
	whenever $\ker(\bm{j})=\ker(\bm{i})$.
	Given $\pi\in \operatorname{NC}(m)$ and $\tau\in P(m)$, for each $\sigma\in\mathcal C(m)$ with $\sigma\leq \pi$, we have $\sigma|_{V}\in\mathcal C(|V|)$  for all $V\in \pi$, where $|V|$ is the number of elements in $V$. In addition, if $\sigma\leq\tau$, we have $\sigma|_{V}\leq \tau|_{V}$. Conversely, for all $V\in\pi$, if there exist  $\sigma|_V\in \mathcal C(|V|)$ with $\sigma|_V\leq \tau|_V$, then $\sigma=\bigcup_V \sigma|_V\leq \tau$. By the definition of the category of partitions (see e.g. \cite[Definition 1.3]{ref13}), if $\sigma_1\in \mathcal C(m_1)$ and $\sigma_2\in \mathcal C(m_2)$, then $\sigma_1\cup \sigma_2\in \mathcal C(m_1+m_2)$. It follows that $\sigma\in\mathcal C(m)$. Hence 
	$$ 	\prod\limits_{V\in\pi}
	\sum\limits_{\substack{\sigma\in \mathcal C(\abs{V})\\ \sigma\leq\tau|_{V}}}C_{\sigma}^{(m)}=\sum_{\substack{\sigma\leq\tau,\sigma\leq\pi\\ \sigma\in \mathcal C(m)}} \prod_{V\in\pi}C_{\sigma|_V}^{(m)}.$$
	For $\bm{i}\in [n]^m$, we denote $\tau=\ker(\bm{i})$. By Eq. (\ref{Emc}) and Eq. (\ref{Emc2}), we have
	$$\begin{aligned}
		\varphi\qty(x_{i_1}\cdots x_{i_m})
		&=\sum\limits_{\pi\in \operatorname{NC}(m)} \prod_{\substack{V\in\pi\\ V=(j_1,\ldots,j_l)}}\kappa_{l}(x_{i_{j_1}},\ldots,x_{i_{j_l}})\\
		&=\sum\limits_{\pi\in \operatorname{NC}(m)}
		\prod\limits_{V\in\pi}
		\sum\limits_{\substack{\sigma\in \mathcal C(\abs{V})\\ \sigma\leq\tau|_{V}}}C_{\sigma}^{(m)}\\
		&=\sum_{\pi\in\operatorname{NC}(m)}\sum_{\substack{\sigma\leq\tau,\sigma\leq\pi\\ \sigma\in \mathcal C(m)}} \prod_{V\in\pi}C_{\sigma|_V}^{(m)}\\
		&=\sum_{\substack{\sigma\in \mathcal C(m)\\ \sigma\leq\tau}}\sum_{\substack{\pi\in\operatorname{NC}(m)\\\pi\geq\sigma}} \prod_{V\in\pi}C_{\sigma|_V}^{(m)}.
	\end{aligned}$$
	For $\sigma\in\mathcal C(m)$, we denote
	$$c_\sigma^{(m)}=\sum_{\substack{\pi\in\operatorname{NC}(m)\\\pi\geq\sigma}} \prod_{V\in\pi}C_{\sigma|_V}^{(m)},$$
	which yields Assertion (ii).
	
	`(\ref*{th1-As3}) $\Rightarrow$ (\ref*{th1-As4})': Applying the  cumulant-moment formula \eqref{Ecm} to $\kappa_m$,  it follows that
	$$\kappa_m(x_{i_1}, \ldots, x_{i_{m}})=\sum\limits_{\pi\in \operatorname{NC}(m)}\mu_{P(m)}(\pi,1_m)\prod_{\substack{V\in\pi\\ V=(j_1,\ldots,j_l)}}\varphi(x_{i_{j_1}}\cdots x_{i_{j_l}}).$$
	As explained previously in `(\ref*{th1-As4}) $\Rightarrow$ (\ref*{th1-As3})', we have 
	$$ 	\prod\limits_{V\in\pi}
	\sum\limits_{\substack{\sigma\in \mathcal C(\abs{V})\\ \sigma\leq\tau|_{V}}}c_{\sigma}^{(m)}=\sum_{\substack{\sigma\leq\tau,\sigma\leq\pi\\ \sigma\in \mathcal C(m)}} \prod_{V\in\pi}c_{\sigma|_V}^{(m)}.$$
	It follows that
	$$\begin{aligned}
		\kappa_m(x_{i_1}, \ldots, x_{i_{m}})&=\sum\limits_{\pi\in \operatorname{NC}(m)}\mu_{P(m)}(\pi,1_m)\prod_{\substack{V\in\pi\\ V=(j_1,\ldots,j_l)}}\varphi(x_{i_{j_1}}\cdots x_{i_{j_l}})\\
		&=\sum_{\substack{\sigma\in \mathcal C(m)\\ \sigma\leq\tau}}\sum_{\substack{\pi\in\operatorname{NC}(m)\\\pi\geq\sigma}} \mu_{P(m)}(\pi,1_m) \prod_{V\in\pi}c_{\sigma|_V}^{(m)}.
	\end{aligned}$$
	Then we obtain
	$$C_\sigma^{(m)}=\sum_{\substack{\pi\in\operatorname{NC}(m)\\\pi\geq\sigma}}\mu_{P(m)}(\pi,\bm{1}_m) \prod_{V\in\pi}c_{\sigma|_V}^{(m)},$$
	which yields Assertion (iii).
\end{proof}

\begin{rk}\label{rk1}
		 (i) {From the arguments `(\ref*{th1-As3}) $\Leftrightarrow$ (\ref*{th1-As4})'}, if the assertions of Theorem \ref{th1} hold, then there are relations between the distinguished scalars $c_\sigma^{(m)}$ and $C_\sigma^{(m)}$ as follows:
		\begin{equation*}
			c_\sigma^{(m)}=\sum_{\substack{\pi\in\operatorname{NC}(m)\\\pi\geq\sigma}} \prod_{V\in\pi}C_{\sigma|_V}^{(m)}\text{ and } C_\sigma^{(m)}=\sum_{\substack{\pi\in\operatorname{NC}(m)\\\pi\geq\sigma}}\mu_{P(m)}(\pi,{1}_m) \prod_{V\in\pi}c_{\sigma|_V}^{(m)}.
		\end{equation*}
		
		(ii)  Let $(x_1,\ldots,x_n)$ be $G_n$-invariant. We denote $$D_n(m)=\{\pi\in P(m): \exists\bm{i}\in [n]^m \text{ such that } \ker{\bm{i}}=\pi\}.$$ 
		 Note that, by Theorem \ref{th1} (\ref*{th1-As3}), we have $\varphi(x_{i_1}\cdots x_{i_m})=\varphi(x_{j_1}\cdots x_{j_m})$ if $\ker(\bm{i})=\ker(\bm{j})$. So we may well define a map $\widetilde{\varphi}_m: D_n(m)\rightarrow \mathbb C$ given by
		$$\widetilde{\varphi}_m(\pi)=\varphi(x_{j_1}\ldots x_{j_m}),$$
		where  $(j_1,\ldots,j_m)\in [n]^m$ is any arbitrary  sequence such that $\ker(\bm{j})=\pi$. Assume that $m\leq n$, then we claim that the constants $c_{\sigma}^{(m)}$'s are uniquely determined, and for $\bm{i}\in [n]^m$, we have
		\begin{equation}\label{eq2}
			\varphi(x_{i_1}\cdots x_{i_m})=
			\sum\limits_{\pi\in \mathcal C_{\leq}(\bm{i})}\sum\limits_{\substack{\sigma\in \mathcal C(m)\\ \sigma\leq \pi}}{\widetilde\varphi_m(\sigma)\mu_{\mathcal C(m)}(\sigma,\pi)},
		\end{equation}
		where $\mu_{\mathcal C(m)}(\sigma,\pi)$ is the Möbius function on $\mathcal C(m)$. In fact,  
		if $m\leq n$, then $D_n(m)=P(m)$. Restricting $\widetilde\varphi_m$ to $\mathcal C(m)$, by Theorem \ref{th1} (\ref*{th1-As3}), we obtain that
		\begin{equation*}
			\label{eq4}
			\widetilde\varphi_m(\pi)=\sum\limits_{\substack{\sigma\in \mathcal C(m)\\ \sigma\leq \pi}} c_{\sigma}^{(m)},
		\end{equation*} 
		for all $\pi\in \mathcal C(m)$.
	From the equivalence between Eq. (\ref{eqi1}) and Eq. (\ref{eqi2}), we deduce that
		\begin{equation*}
			c_\pi^{(m)}=\sum\limits_{\substack{\sigma\in \mathcal C(m)\\ \sigma\leq \pi}}{\widetilde\varphi_m(\sigma)\mu_{\mathcal C(m)}(\sigma,\pi)},
		\end{equation*}
		which yields Eq. (\ref{eq2}). Similarly, let $\widetilde\kappa_m:D_n(m)\rightarrow \mathbb{R}$ be the map defined as 
		$$\widetilde{\kappa}_m(\pi)=\kappa_m(x_{j_1},\ldots, x_{j_m}),$$
		for an arbitrary  sequence $\bm{j}\in [n]^m$ such that $\ker(\bm{j})=\pi$.
		Then for all $m\leq n$ and $\bm{i}\in [n]^m$, we have
		\begin{equation}\label{eq3}
			\kappa_{m}(x_{i_1},\ldots ,x_{i_m})=
			\sum\limits_{\pi\in \mathcal C_{\leq}(\bm{i})}\sum\limits_{\substack{\sigma\in \mathcal C(m)\\ \sigma\leq \pi}}{\widetilde\kappa_m(\sigma)\mu_{\mathcal C(m)}(\sigma,\pi)}.
		\end{equation}
		In particular, let $(x_1,\ldots,x_n)$ be $O_n^+$-invariant. The category of non-crossing partitions associated with $O_n^+$ is $\operatorname{NC}_2$, where for any $\pi,\sigma\in \operatorname{NC}_2(m)$, the order relation $\pi\leq \sigma$ holds if and only if $\pi=\sigma$. Consequently,  {for all $m\leq 2n$ and $\bm{i}\in [n]^m$,} we have the inclusion $\operatorname{NC}_2(m)\subset D_n(m)$. By the previous analysis, it follows that,
		\begin{equation}\label{m-On-inv}
			\varphi(x_{i_1}\cdots x_{i_m})=
		\sum\limits_{\pi\in \operatorname{NC}_{2,\leq}(\bm{i})}{\widetilde\varphi_m(\pi)}
		\end{equation}
		and
		\begin{equation}\label{c-On-inv}
		\kappa_{m}(x_{i_1},\ldots ,x_{i_m})=
		\sum\limits_{\pi\in \operatorname{NC}_{2,\leq}(\bm{i})}{\widetilde\kappa_m(\pi)}.
		\end{equation}
\end{rk}

In the following, we discuss the asymptotic behaviors of the $G_n$-invariant random variables as $n\to\infty$. Note that a quantitative comparison with the operator-valued free distributions has been established in \cite[Theorem 4.8]{ref5}. Below, based on the techniques introduced in the main theorem, we will give alternative equivalent descriptions of (scalar-valued) asymptotic freeness for $S_n^+$ and $O_n^+$-invariant sequences. It is well known that a sequence  $(x_1^{(n)}, \ldots, x^{(n)}_n)$ is asymptotically free if and only if its mixed cumulants converge to zero as $n \rightarrow \infty$. The following simple corollaries illustrate that it suffices to focus on moments or cumulants associated with non-crossing partitions for some  $S_n^+$ and $O_n^+$-invariant sequences. Recall that the category $\mathcal C$ of partitions associated with $S_n^+$ (respectively, $O_n^+$) is $\operatorname{NC}$ (respectively, $\operatorname{NC}_2$).

\begin{cor}
	Let $n>0$ and let $(x_i^{(n)})_{i\in \mathbb N}$ be a sequence of self-adjoint elements in a $\ast$-probability space $(A_n,\varphi_n)$. For any $n\geq 4$, suppose $(x_1^{(n)},\ldots,x^{(n)}_n)$ is $S_n^+$-invariant. Then the following are equivalent:
	\begin{enumerate}[\rm(i)]
		\item\label{coro1-as2} For any $p\geq 2$, $x_1^{(n)},\ldots,x_p^{(n)}$ are asymptotically free as $n\rightarrow\infty$.
		\item\label{coro1-as3}  For any $m>0$ and $\bm{i}\in \mathbb N^{m}$, if  $\ker(\bm{i})\in\operatorname{NC}(m)$ and $\#\ker(\bm{i})>1$ (namely, the number of blocks in $\ker(\bm{i})$ is greater {than} $1$), then $\kappa_{m}(x_{i_1}^{(n)},\ldots, x_{i_{m}}^{(n)})\rightarrow 0$ as $n\rightarrow\infty$.
	\end{enumerate}
\end{cor}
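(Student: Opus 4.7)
The plan is to observe that the implication (\ref*{coro1-as2}) $\Rightarrow$ (\ref*{coro1-as3}) is essentially by definition, so the substance lies in the reverse direction. Indeed, for any tuple $\bm{i}\in\mathbb N^m$ with $\ker(\bm{i})\in\operatorname{NC}(m)$ and $\#\ker(\bm{i})>1$, taking $p=\max(i_1,\ldots,i_m)$ reduces the vanishing of $\kappa_m(x_{i_1}^{(n)},\ldots,x_{i_m}^{(n)})$ to a special case of the asymptotic freeness of $x_1^{(n)},\ldots,x_p^{(n)}$.

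For (\ref*{coro1-as3}) $\Rightarrow$ (\ref*{coro1-as2}), I would fix $p\geq 2$, $m\geq 1$ and $\bm{i}\in[p]^m$ with $\#\ker(\bm{i})>1$, and aim to prove that $\kappa_m(x_{i_1}^{(n)},\ldots,x_{i_m}^{(n)})\to 0$. For every $n\geq\max(m,p,4)$ the tuple $\bm{i}$ lies in $[n]^m$, and since the category associated with $S_n^+$ is $\operatorname{NC}$, Eq.~(\ref{eq3}) in Remark~\ref{rk1}(ii) applied to the $S_n^+$-invariant family $(x_1^{(n)},\ldots,x_n^{(n)})$ yields
\[
\kappa_m\bigl(x_{i_1}^{(n)},\ldots,x_{i_m}^{(n)}\bigr)=\sum_{\substack{\pi\in\operatorname{NC}(m)\\ \pi\leq\ker(\bm{i})}}\sum_{\substack{\sigma\in\operatorname{NC}(m)\\ \sigma\leq\pi}}\widetilde{\kappa}_m^{(n)}(\sigma)\,\mu_{\operatorname{NC}(m)}(\sigma,\pi),
\]
where $\widetilde{\kappa}_m^{(n)}(\sigma)$ denotes the common value of $\kappa_m(x_{j_1}^{(n)},\ldots,x_{j_m}^{(n)})$ over any $\bm{j}\in[n]^m$ with $\ker(\bm{j})=\sigma$.

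The key observation is that whenever $\sigma\leq\pi\leq\ker(\bm{i})$ and $\#\ker(\bm{i})>1$, the monotonicity of block count forces $\#\sigma\geq\#\pi\geq\#\ker(\bm{i})>1$. Thus every $\sigma$ appearing in the double sum is a non-crossing partition of $[m]$ with at least two blocks, and hypothesis~(\ref*{coro1-as3}) provides $\widetilde{\kappa}_m^{(n)}(\sigma)\to 0$ as $n\to\infty$. Since $\operatorname{NC}(m)$ is finite and the Möbius values $\mu_{\operatorname{NC}(m)}(\sigma,\pi)$ do not depend on $n$, the right-hand side is a fixed finite linear combination of vanishing terms and hence tends to $0$, giving (\ref*{coro1-as2}).

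No serious obstacle is expected: the bulk of the work has already been packaged into Theorem~\ref{th1}(\ref*{th1-As4}) and its corollary Eq.~(\ref{eq3}), and the remaining argument reduces to the elementary fact that refinements of a partition with more than one block also have more than one block. The only minor point is the technical restriction $m\leq n$ needed to apply Remark~\ref{rk1}(ii), which is automatic in the asymptotic regime.
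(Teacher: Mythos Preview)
Your proposal is correct and follows essentially the same route as the paper's proof: both directions rely on Theorem~\ref{th1}(\ref*{th1-As4}) and the Möbius expansion Eq.~(\ref{eq3}) from Remark~\ref{rk1}(ii), together with the observation that $\sigma\leq\pi\leq\ker(\bm{i})$ forces $\#\sigma\geq\#\ker(\bm{i})>1$. Your handling of (\ref*{coro1-as2}) $\Rightarrow$ (\ref*{coro1-as3}) is in fact slightly more direct than the paper's (you simply take $p=\max_k i_k$ rather than invoking the invariance to transfer indices), but this is a cosmetic difference.
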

\begin{proof}
	  `(\ref*{coro1-as2})$\Rightarrow$(\ref*{coro1-as3})':  Fix $m>0$. For $\pi\in P(m)$, we have $\#\pi=1$ if and only if $\pi=1_m$.
	  In other words, Assertion (\ref*{coro1-as2}) holds if and only if $\kappa_{m}(x_{j_1}^{(n)},\ldots ,x_{j_m}^{(n)})\rightarrow0$ whenever $\ker(\bm{j})\neq 1_m$. Let $p\geq m$. Then for each $\bm{i}\in \mathbb N^m$, there exists $\bm{j}\in [p]^m$, such that $\ker(\bm{j})=\ker(\bm{i})$. 
	Let $n$ be large enough. By Theorem \ref{th1} (\ref*{th1-As4}), we obtain $\kappa_{m}(x_{i_1}^{(n)},\ldots ,x_{i_m}^{(n)})=\kappa_{m}(x_{j_1}^{(n)},\ldots ,x_{j_m}^{(n)})$. 
	Therefore, if $\ker(\bm{i})\in \operatorname{NC}(m)$ and $\#\ker(\bm{i})>1$, we have
	$$\lim_{n\rightarrow\infty}\kappa_{m}\qty(x_{i_1}^{(n)},\ldots ,x_{i_m}^{(n)})=\lim_{n\rightarrow\infty}\kappa_{m}\qty(x_{j_1}^{(n)},\ldots ,x_{j_m}^{(n)})=0,$$
	which proves Assertion (\ref*{coro1-as3}).
	
	 `(\ref*{coro1-as3})$\Rightarrow$(\ref*{coro1-as2})':
	For any $m>0$,  when 
	$n$ is sufficiently large, since the sequence $(x_1^{(n)},\ldots,x^{(n)}_n)$ is $S_n^+$-invariant, it follows from Eq. \eqref{eq3} in Remark \ref{rk1} (ii) that
	\begin{equation}\label{eq13}
		\kappa_{m}^{(n)}\qty(x_{i_1}^{(n)},\ldots ,x_{i_m}^{(n)})=
		\sum\limits_{\pi\in \operatorname{NC}_{\leq}(\bm{i})}\sum\limits_{\substack{\sigma\in \operatorname{NC}(m)\\ \sigma\leq \pi}}{\widetilde\kappa_m^{(n)}(\sigma)\mu_{\operatorname{NC}(m)}(\sigma,\pi)},
	\end{equation}
	where $\widetilde\kappa_m^{(n)} : P(m) \rightarrow \mathbb{C}$ is defined by
	$$\widetilde\kappa_m^{(n)}(\ker(\bm{i}))=\kappa_m\qty(x^{(n)}_{i_1},\dots,x^{(n)}_{i_m}).$$
	Here $\widetilde\kappa_m^{(n)}$ is well-defined since for any $\sigma\in \operatorname{NC}(m)$, there exists $\bm{i}\in \N^m$ such that $\ker(\bm{i})=\sigma$. 
	Then by Assertion (\ref*{coro1-as3}) and Theorem \ref{th1} (\ref*{th1-As4}),
	we conclude that $\widetilde\kappa_m^{(n)}(\sigma)=\kappa_m(x^{(n)}_{i_1},\dots,x^{(n)}_{i_m})\rightarrow0$ for all $\sigma\in\operatorname{NC}(m)$ with $\sigma\neq 1_m$.
 	Note that $\#\sigma\geq\#\pi$ whenever $\sigma\leq \pi$. 
 	Therefore, from Eq. (\ref{eq13}) and the transitivity of the partial order `$\leq$', it follows that for any $p\geq 2$ and  $\bm{i}\in [p]^m$, 
 	$$\kappa_{m}^{(n)}\qty(x_{i_1}^{(n)},\ldots ,x_{i_m}^{(n)})\rightarrow 0,\text{  as } n\rightarrow \infty,$$  whenever $\ker(\bm{i})\neq 1_m$,
	which implies Assertion (i).
\end{proof}

\begin{cor}\label{coro1}
	Let $n>0$ and let $(x_i^{(n)})_{i\in \mathbb N}$ be a sequence of self-adjoint elements in a $\ast$-probability space $(A_n,\varphi_n)$ with variance $1$. For any $n\geq 2$, suppose $(x_1^{(n)},\ldots,x^{(n)}_n)$ is $O_n^+$-invariant. Then the following are equivalent:
	\begin{enumerate}[\rm(i)]
		\item For any $p\geq 2$, $$x_1^{(n)},\ldots,x_p^{(n)}\xrightarrow{distr.}s_1,\ldots,s_p, \text{ as }n\rightarrow\infty,$$ where $(s_1,\ldots,s_p)$ forms a semi-circular system in $(A,\varphi)$, namely, each $s_i$ ($1\leq i\leq p$) is a standard semicircular random variable and the variables are free with respect to $\varphi$ (see e.g. \cite[Definition 7.11]{bk4}).
		\item  For any $k\geq 2$ and $\bm{i}\in \N^{2k}$ such that $\ker(\bm{i})\in\operatorname{NC}_2(2k)$, $$\kappa_{2k}\left(x_{i_1}^{(n)},\ldots ,x_{i_{2k}}^{(n)}\right)\rightarrow 0, \text{ as } n\rightarrow\infty.$$
		\item For any  $k> 0$ and  $\bm{i}\in \N^{2k}$ such that $\ker(\bm{i})\in\operatorname{NC}_2(2k)$, $$\varphi_{n}\left(x_{i_1}^{(n)}\cdots x_{i_{2k}}^{(n)}\right)\rightarrow 1, \text{ as } n\rightarrow\infty.$$
		\item For all $k> 0$ and $\pi\in \operatorname{NC}_2(2k)$, 
		$$\varphi_n\qty(\frac{1}{n^k}\sum_{\bm{i}\in [n]^{2k}} \delta_{\pi}(\bm{i})x_{i_1}^{(n)}\cdots x_{i_{2k}}^{(n)})\rightarrow 1, \text{ as }n\rightarrow \infty.$$
	\end{enumerate}
\end{cor}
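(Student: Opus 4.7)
The plan is to close the cycle (iii)$\Rightarrow$(i)$\Rightarrow$(ii)$\Rightarrow$(iii), and then to establish (iii)$\Leftrightarrow$(iv) by a direct linear-algebraic computation. I lean throughout on the $O_n^+$-specific moment and cumulant representations \eqref{m-On-inv} and \eqref{c-On-inv} from Remark~\ref{rk1}(ii), and on the fact that $\operatorname{NC}_2(l)=\varnothing$ for odd $l$ forces every odd moment and every odd cumulant to vanish under $O_n^+$-invariance; together with variance $1$ this also yields $\kappa_2(x_i^{(n)},x_i^{(n)})=1$.

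For (iii)$\Rightarrow$(i), formula \eqref{m-On-inv} gives (once $n\geq m/2$)
\[
\varphi_n(x_{i_1}^{(n)}\cdots x_{i_m}^{(n)})=\sum_{\pi\in \operatorname{NC}_{2,\leq}(\bm{i})}\widetilde\varphi_m^{(n)}(\pi);
\]
by (iii) each summand tends to $1$, so the moment converges to $|\operatorname{NC}_{2,\leq}(\bm{i})|$, which is precisely the corresponding moment of a free semicircular family (odd moments vanish on both sides). Then (i)$\Rightarrow$(ii) follows because convergence of joint distributions entails convergence of every fixed free cumulant (a polynomial expression in moments) and all cumulants of order $\neq 2$ vanish for a free semicircular system. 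For (ii)$\Rightarrow$(iii), I would expand $\varphi_n(x_{i_1}^{(n)}\cdots x_{i_{2k}}^{(n)})=\sum_{\pi\in \operatorname{NC}(2k)}\kappa_\pi^{(n)}$ via \eqref{Emc} and identify which $\pi$ survive in the limit when $\ker(\bm{i})\in\operatorname{NC}_2(2k)$: odd-size blocks give cumulant $0$; even-size blocks of size $\geq 4$ give cumulants which by \eqref{c-On-inv} are finite sums of $\widetilde\kappa^{(n)}$-values at $\operatorname{NC}_2$-patterns, hence vanish by (ii); pair blocks contribute $\kappa_2(x_j^{(n)},x_{j'}^{(n)})=[j=j']$. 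Only $\pi=\ker(\bm{i})$ then survives, and its contribution is $\prod_{V\in \pi}1=1$, yielding (iii).

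For (iii)$\Leftrightarrow$(iv), set $S_n(\pi):=\varphi_n\bigl(\frac{1}{n^k}\sum_{\bm{i}\in[n]^{2k}}\delta_\pi(\bm{i})x_{i_1}^{(n)}\cdots x_{i_{2k}}^{(n)}\bigr)$; applying \eqref{m-On-inv} and the count $\#\{\bm{i}\in[n]^{2k}:\tau\leq\ker(\bm{i})\}=n^{\#\tau}$ rewrites
\[
S_n(\pi)=\sum_{\pi'\in\operatorname{NC}_2(2k)}n^{\#(\pi\vee\pi')-k}\,\widetilde\varphi_{2k}^{(n)}(\pi').
\]
This is a linear system with matrix $M_n(\pi,\pi')=n^{\#(\pi\vee\pi')-k}$; since $\#(\pi\vee\pi')\leq k$ with equality iff $\pi=\pi'$, we have $M_n\to I$ entrywise, so $M_n$ is invertible for $n$ large and $M_n^{-1}\to I$. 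Inverting shows that $\widetilde\varphi_{2k}^{(n)}(\pi)\to 1$ for all $\pi\in \operatorname{NC}_2(2k)$ if and only if $S_n(\pi)\to 1$ for all such $\pi$, giving the equivalence.

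I expect the main obstacle to be in (ii)$\Rightarrow$(iii): the hypothesis only constrains cumulants whose index pattern lies in $\operatorname{NC}_2$, whereas the moment-cumulant expansion produces cumulants on arbitrary block patterns. The resolution is precisely \eqref{c-On-inv}, which rewrites any such cumulant as a finite sum of $\operatorname{NC}_2$-pattern cumulants controlled by (ii); this is the bridge that makes the implication go through.
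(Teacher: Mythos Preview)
Your proof is correct and uses the same core ingredients as the paper—namely the representations \eqref{m-On-inv} and \eqref{c-On-inv} and the Gram-matrix asymptotics $n^{\#(\pi\vee\pi')-k}\to\delta_{\pi,\pi'}$. The organization differs: the paper closes a single cycle (i)$\Rightarrow$(ii)$\Rightarrow$(iii)$\Rightarrow$(iv)$\Rightarrow$(i), whereas you run the triangle (iii)$\Rightarrow$(i)$\Rightarrow$(ii)$\Rightarrow$(iii) and treat (iii)$\Leftrightarrow$(iv) separately via the invertible linear system $S_n=M_n\,\widetilde\varphi$ with $M_n\to I$. The practical difference is that the paper's (iv)$\Rightarrow$(i) invokes the Weingarten formula \eqref{Ewg} together with the external estimate $n^k\operatorname{Wg}^{O_n^+}_{2k,n}(\pi,\sigma)\to\delta_{\pi,\sigma}$ from \cite{ref14}, while your route reaches (i) directly from (iii) via \eqref{m-On-inv} and obtains (iv)$\Rightarrow$(iii) by an elementary continuity-of-inversion argument (which in effect reproves that Weingarten asymptotic, since $M_n^{-1}=n^k\operatorname{Wg}^{O_n^+}_{2k,n}$). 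Your argument is thus slightly more self-contained; otherwise the two proofs coincide, including the key step (ii)$\Rightarrow$(iii), where both use \eqref{c-On-inv} to reduce higher-order cumulants on arbitrary block patterns to sums of $\widetilde\kappa^{(n)}$-values on $\operatorname{NC}_2$ patterns controlled by (ii).
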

\begin{proof} 
	`(i) $\Rightarrow$ (ii)': 
	For any $k\geq 2$, we set $p\geq 2k$. If Assertion (i) holds, then for any $\bm{j}\in [p]^{2k}$, we have
	$$\lim_{n\rightarrow\infty}\kappa_{2k}\left(x_{j_1}^{(n)},\ldots ,x_{j_{2k}}^{(n)}\right)=\kappa_{2k}\left(s_{j_1},\ldots ,s_{j_{2k}}\right).$$
	Recall that the mixed cumulants of $s_1,\ldots,s_p$ are given by:
	\begin{equation}\label{sc-cumulant}
		\kappa_{2k}(s_{j_1},\ldots,s_{j_{2k}})=\left\{\begin{matrix}
			1, & \text{if } k=1\text{ and } j_1=j_2;\\
			0, & \text{otherwise}.
		\end{matrix}\right.
	\end{equation} 
	(see e.g. \cite[Examples 11.21]{bk4}).
	
	Given any $\bm{i}\in \N^{2k}$, there exists $\bm{j}\in [p]^{2k}$ such that $\ker(\bm{j})=\ker(\bm{i})$. By Theorem \ref{th1} (\ref*{th1-As4}) and Eq. \eqref{sc-cumulant}, 
	if  $\ker(\bm{i})\in \operatorname{NC}_2(2k)$, we have
	$$\lim_{n\rightarrow\infty}\kappa_{2k}\left(x_{i_1}^{(n)},\ldots ,x_{i_{2k}}^{(n)}\right)=\lim_{n\rightarrow\infty}\kappa_{2k}\left(x_{j_1}^{(n)},\ldots ,x_{j_{2k}}^{(n)}\right)=0,$$
	which implies Assertion (ii).
	
	`(ii) $\Rightarrow$ (iii)': 
	For any  $m\in \mathbb N$, let $n$ be sufficiently large. Since the sequence $(x_1^{(n)},\ldots,x^{(n)}_n)$ is $O_n^+$-invariant, it follows from Eq. (\ref{c-On-inv})  that the $m$-th order cumulant $\kappa_m(x_{i_1}^{(n)}\cdots x_{i_{m}}^{(n)})$ is given by  
	$$\kappa_m\qty(x_{i_1}^{(n)},\dots, x_{i_{m}}^{(n)})=\sum_{\pi\in \operatorname{NC}_{2,\leq}(\bm{i})} {\widetilde\kappa_m^{(n)}(\pi)}, $$
	where $\widetilde\kappa_m^{(n)}(\pi)=\kappa_m^{(n)}(x_{j_1}^{(n)},\dots, x_{j_{m}}^{(n)})$ for arbitrary sequence $\bm{j}\subset [n]^m$ such that $\ker(\bm{j})=\pi$.
	 In particular, $\kappa_m(x_{j_1}^{(n)}\cdots x_{j_{m}}^{(n)})=0$ whenever $m$ is odd. 
	 Together with Assertion (ii), it follows that, for all $m\neq 2$ and all  $\bm{j}\in \N^{m}$, 
	 $$\kappa_{m}\qty(x_{j_1}^{(n)}\cdots x_{j_{m}}^{(n)})\rightarrow 0,\text{ as }n\rightarrow \infty.$$ 
	  When $m=2$, It follows that
	$$\kappa_2\qty(x_{j_1}^{(n)},x_{j_2}^{(n)})=\begin{cases}
		\operatorname{Var}({x_1^{(n)}}), &\text{ if } j_1=j_2;\\
		0, &\text{ if } j_1\neq j_2.
	\end{cases}$$
	
	For any $k>0$ and $\bm{i}\in \N^{2k}$, there exists $\bm{j}\in \N^{2k}$ such that $\ker(\bm{j})= \ker(\bm{i})$.
	If $\ker(\bm{i})\in \operatorname{NC}_2(2k)$, then by Theorem \ref{th1} (\ref*{th1-As3}) and the moment-cumulant formula (\ref{Emc}), we obtain
	\begin{equation*}
		\begin{aligned} \lim_{n\rightarrow\infty}\varphi_n\left(x_{i_1}^{(n)}\cdots x_{i_{2k}}^{(n)}\right)
			&=\lim_{n\rightarrow\infty}\varphi_n\left(x_{j_1}^{(n)}\cdots x_{j_{2k}}^{(n)}\right)\\
			&=
			\lim_{n\rightarrow\infty}\sum_{\pi\in \operatorname{NC}(2k)}\kappa_\pi\left(x_{j_1}^{(n)},\ldots, x_{j_{2k}}^{(n)}\right)\\
			&=\lim_{n\rightarrow\infty}\sum_{\pi\in \operatorname{NC}_2(2k)}\kappa_\pi\left(x_{j_1}^{(n)},\ldots, x_{j_{2k}}^{(n)}\right)\\
			&=\lim_{n\rightarrow\infty}\kappa_{\ker(\bm{i})}\left(x_{j_1}^{(n)},\ldots, x_{j_{2k}}^{(n)}\right)\\
			&=\qty(\kappa_2\qty(s_{1},s_{1}))^k=1.
		\end{aligned}
	\end{equation*}	
	
	`(iii) $\Rightarrow$ (iv)': Given any  $m>0$, we assume $n$ is large enough. According to Remark \ref{rk1} (ii), we denote $\widetilde\varphi_m^{(n)}(\pi): P(m)\rightarrow \mathbb C$ defined as $\widetilde\varphi_m^{(n)}(\pi)=\varphi_n(x_{i_1}^{(n)}\cdots x_{i_{m}}^{(n)})$, where $\bm{i}$ is an arbitrary sequence in $[n]^m$ such that $\ker(\bm{i})=\pi$. 
	 For any $\pi\in P(m)$, there exists $\bm{j}\in \N^{m}$ such that $\ker(\bm{j})=\pi$. By Theorem \ref{th1} (\ref*{th1-As3}), we obtain 
	 $$\widetilde\varphi_m^{(n)}(\pi)=\varphi_n\qty(x_{j_1}^{(n)}\cdots x_{j_{m}}^{(n)}).$$
	  Moreover, if $m=2k$ ($k>0$) and
	$\pi\in\operatorname{NC}_2(2k)$, then by  Assertion (iii), we have 
	\begin{equation*}
		\widetilde\varphi_{2k}^{(n)}(\ker(\bm{i}))=\varphi_n\left(x_{j_1}^{(n)}\cdots x_{j_{2k}}^{(n)}\right)=1+o(1).
	\end{equation*}
	By Eq. (\ref{m-On-inv}), {it follows that}
	$$\begin{aligned}
		\varphi_n\qty(\frac{1}{n^k}\sum_{\bm{i}\in [n]^{2k}} \delta_{\pi}(\bm{i})x_{i_1}^{(n)}\cdots x_{i_{2k}}^{(n)})&=\frac{1}{n^k}\sum_{\bm{i}\in [n]^{2k}} \delta_{\pi}(\bm{i})\varphi_n\left(x_{i_1}^{(n)}\cdots x_{i_{2k}}^{(n)}\right)\\
		&=\frac{1}{n^k}\sum_{\bm{i}\in [n]^{2k}} \delta_{\pi}(\bm{i})\sum_{\sigma\in \operatorname{NC}_2(2k)}\delta_{\sigma}(\bm{i})\widetilde\varphi_{2k}^{(n)}(\sigma)\\
		&=\frac{1}{n^k}\sum_{\sigma\in \operatorname{NC}_2(2k)}\widetilde\varphi_{2k}^{(n)}(\sigma)\sum_{\bm{i}\in [n]^{2k}} \delta_{\pi}(\bm{i})\delta_{\sigma}(\bm{i})\\
		&=\frac{1}{n^k}\sum_{\sigma\in \operatorname{NC}_2(2k)}n^{\#(\pi\vee\sigma)}\left(1+o(1)\right).
	\end{aligned}$$
	For $\sigma,\pi\in \operatorname{NC}_2(2k)$, we have $\#(\pi\vee\sigma)\leq k$ and the equation holds if and only if $\pi=\sigma$. Therefore, we obtain
	$$\frac{n^{\#(\pi\vee\sigma)}}{n^k}\left(1+O(n^{-1})\right)\rightarrow\left\{
	\begin{matrix}
		1, & \pi=\sigma;\\
		0, & \text{otherwise},
	\end{matrix}
	\right.\quad\text{as } n\rightarrow \infty,$$ 
	which yields Assertion (iv).
	
	
	`(iv) $\Rightarrow$ (i)': 
		For each $m\in \mathbb N$, by Remark \ref{rk1} (ii), the moment $\varphi_n(x_{i_1}^{(n)}\cdots x_{i_{m}}^{(n)})$ is of the form given by Eq. (\ref{m-On-inv}), namely, 
	\begin{equation*}
		\varphi_n\qty(x_{i_1}^{(n)}\cdots x_{i_m}^{(n)})=
		\sum\limits_{\pi\in \operatorname{NC}_{2,\leq}(\bm{i})}{\widetilde\varphi_m^{(n)}(\pi)}.
	\end{equation*}
	In particular, $\varphi_n(x_{i_1}^{(n)}\cdots x_{i_{m}}^{(n)})=0$ whenever $m$ is odd. Assume $m=2k$ (k>0), then for all $p\geq 2$ and $\bm{i}\in [p]^{2k}$, we obtain
	$$ \begin{aligned}
		\varphi_n\left(x_{i_1}^{(n)}\cdots x_{i_{2k}}^{(n)}\right)
		&=(\varphi_n\otimes h)\circ\alpha_n\left(x_{i_1}^{(n)}\cdots x_{i_{2k}}^{(n)}\right)\\
		&=\sum_{\bm{j}\in[n]^{2k}}\varphi_n\left(x_{j_1}^{(n)}\cdots x_{j_{2k}}^{(n)}\right) h\qty(u_{j_1i_1}\cdots u_{j_{2k}i_{2k}})\\
		&=\sum_{\bm{j}\in[n]^{2k}}\varphi_n\left(x_{j_1}^{(n)}\cdots x_{j_{2k}}^{(n)}\right)\sum_{\pi,\sigma\in\operatorname{NC}_2(2k)} \delta_{\pi}(\bm{i}) \delta_{\sigma}(\bm{j})\operatorname{Wg}^{O_n^+}_{2k,n}(\pi,\sigma)\\
		&= \sum_{\pi,\sigma\in\operatorname{NC}_2(2k)} \delta_{\pi}(\bm{i}) \operatorname{Wg}^{O_n^+}_{2k,n}(\pi,\sigma)n^k \varphi_n\left(\frac{1}{n^k}\sum_{\bm{j}\in[n]^{2k}}\delta_{\sigma}(\bm{j})x_{j_1}^{(n)}\cdots x_{j_{2k}}^{(n)}\right).
	\end{aligned}$$
	By \cite[Proposition 7.2]{ref14} and Assertion (iv),  we obtain
	$$\operatorname{Wg}^{O_n^+}_{2k,n}(\pi,\sigma)n^k\rightarrow\left\{\begin{matrix}
		1 & \pi=\sigma\\
		0 & \text{otherwise}
	\end{matrix}\right. \quad \text{and} \quad  \varphi_n\left(\frac{1}{n^k}\sum_{\bm{j}\in[n]^{2k}}\delta_{\sigma}(\bm{j})x_{j_1}^{(n)}\cdots x_{j_{2k}}^{(n)}\right)\rightarrow 1.$$
	It follows that, for each $m\in \mathbb N$ and $\bm{i}\in [p]^m$, we have
	$$\lim_{n\rightarrow\infty}\varphi_n\left(x_{i_1}^{(n)}\cdots x_{i_{m}}^{(n)}\right)=\sum_{\pi\in\operatorname{NC}_{2,\leq}(\bm{i})}1.$$
	In addition, applying the moment-cumulant formula (\ref{Emc}) together with the values of mixed cumulants  for  free semicicular random variables (\ref{sc-cumulant}), we obtain
	$$\varphi\left(s_{i_1}\cdots s_{i_{m}}\right)=\sum_{\pi\in\operatorname{NC}(m)}\kappa_{m}(s_{i_1},\ldots,s_{i_m})=\sum_{\pi\in\operatorname{NC}_{2,\leq}(\bm{i})}1.$$
	Thus, Assertion (i) follows.
\end{proof}


\section{Infinite de Finetti theorem for  easy quantum groups}\label{sect4}

In this section, we refine the previous arguments to obtain an infinite de Finetti theorem for free easy quantum groups. 
Let $(A,\varphi)$ be a $\ast$-probability space generated by a sequence of self-adjoint elements $(x_i)_{i\in\mathbb N}$. 
For a category of non-crossing partitions $\mathcal C$, we denote by $G_n(\mathcal C)$ (or simply $G_n$ if no confusion arises) the associated free easy quantum group acting on $n$ variables. 
We call $(x_i)_{i\in\mathbb N}$ \textbf{${G}$-invariant} if $(x_1,\ldots,x_n)$ is $G_n$-invariant for all $n\in\mathbb N$. 

In the infinite setting,  let $(x_i)_{i\in\mathbb N}$ be a $G$-invariant sequence in the $\ast$-probability space $(A,\varphi)$. For each $\pi\in P(m)$, there exists $\bm{i}=(i_1,\ldots,i_m)\in \nat^m$, such that $\ker{(\bm{i})}=\pi$. 
Extending the notation from the  Remark \ref{rk1}  (ii), we define the map $\widetilde{\varphi}_m: P(m)\rightarrow\mathbb C$ by $\widetilde{\varphi}_m(\pi)=\varphi(x_{i_1}\cdots x_{i_m})$, where $(i_1,\ldots,i_m)$ is any arbitrary  sequence in $\mathbb N^m$ satisfying $\ker(\bm{i})=\pi$. $\widetilde{\varphi}_m$ is well defined  follows from Theorem \ref{th1} which guarantees that $\varphi(x_{i_1}\cdots x_{i_m})=\varphi(x_{j_1}\cdots x_{j_m})$ whenever $\ker(\bm{i})=\ker(\bm{j})$.
 An analogous result holds for the cumulant function $\widetilde{\kappa}_m$. With these definitions in place, we now state the following theorem.
\begin{thm}
	\label{th2}
	Let $(x_i)_{i\in \mathbb N}$ be a family of self-adjoint random variables in the $\ast$-probability space $(A, \varphi)$.  Let $G_n$ and $\mathcal C$ be as above. Then
	the following are equivalent:
	\begin{itemize}
		\item	
		[(i)] The sequence $(x_i)_{i\in \mathbb N}$ is $G$-invariant. 
		\item
		[(ii)] For any $m>0$ and $\bm{i}\in \nat^m$, 
		\begin{equation}\label{eqp}
			\varphi(x_{i_1}\cdots x_{i_m})=
			\sum\limits_{\pi\in \mathcal C_{\leq}(\bm{i})}\sum\limits_{\substack{\sigma\in \mathcal C(m)\\ \sigma\leq \pi}}{\widetilde\varphi_m(\sigma)\mu_{\mathcal C(m)}(\sigma,\pi)},
		\end{equation}
		In particular, $\varphi(x_{i_1}\cdots x_{i_m})$ is zero except if   
		$\mathcal C_{\leq}(\bm{i})\neq \varnothing$.
		\item[(iii)] For any $m>0$ and $\bm{i}\in \nat^m$, 
		\begin{equation}\label{eqc1}
			\kappa_{m}(x_{i_1},\ldots ,x_{i_m})=
			\sum\limits_{\pi\in \mathcal C_{\leq}(\bm{i})}\sum\limits_{\substack{\sigma\in \mathcal C(m)\\ \sigma\leq \pi}}{\widetilde\kappa_m(\sigma)\mu_{\mathcal C(m)}(\sigma,\pi)}.
		\end{equation}
		In particular, $\kappa_{m}(x_{i_1},\ldots ,x_{i_m})$ is zero except if   
		$\mathcal C_{\leq}(\bm{i})\neq \varnothing$.
	\end{itemize}
\end{thm}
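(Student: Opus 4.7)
The plan is to derive Theorem \ref{th2} directly from the finite statement Theorem \ref{th1} combined with Remark \ref{rk1} (ii). By definition, $G$-invariance of $(x_i)_{i\in\mathbb N}$ means that each truncation $(x_1,\ldots,x_n)$ is $G_n$-invariant, which lets us apply Theorem \ref{th1} at every scale, and then pass to the limit on the combinatorial side (the formulas \eqref{eqp} and \eqref{eqc1} are stable in $n$ once $n$ is large enough to represent all kernels).

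For the implications \textup{(i)$\Rightarrow$(ii)} and \textup{(i)$\Rightarrow$(iii)}, I would fix $m>0$ and $\bm{i}\in\mathbb N^m$, then choose $n\geq\max(4,m,i_1,\ldots,i_m)$ so that $\bm i\in[n]^m$ and Theorem \ref{th1} applies to $(x_1,\ldots,x_n)$. Since $m\leq n$, Remark \ref{rk1} (ii) produces exactly the formulas \eqref{eq2} and \eqref{eq3}, which coincide with \eqref{eqp} and \eqref{eqc1}. The key preliminary point is that the maps $\widetilde\varphi_m,\widetilde\kappa_m:P(m)\to\mathbb C$ are well defined globally: for any $\pi\in P(m)$ pick any $\bm j\in\mathbb N^m$ with $\ker(\bm j)=\pi$; choose $n$ large enough to contain all entries of $\bm j$ and apply Theorem \ref{th1} \ref{th1-As3}--\ref{th1-As4} to see that $\varphi(x_{j_1}\cdots x_{j_m})$ and $\kappa_m(x_{j_1},\ldots,x_{j_m})$ depend only on $\ker(\bm j)$, so the definitions are independent of the chosen representative and of the chosen $n$.

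For the converse \textup{(ii)$\Rightarrow$(i)}, I would fix an arbitrary $n$ and rewrite \eqref{eqp} restricted to $[n]^m$ in the form
\[
\varphi(x_{i_1}\cdots x_{i_m})=\sum_{\pi\in\mathcal C_\leq(\bm{i})}c_\pi^{(m)},\qquad c_\pi^{(m)}:=\sum_{\substack{\sigma\in\mathcal C(m)\\ \sigma\leq\pi}}\widetilde\varphi_m(\sigma)\,\mu_{\mathcal C(m)}(\sigma,\pi)\in\mathbb C,
\]
so that the scalars $c_\pi^{(m)}$ depend only on $\pi$ and $m$. This is precisely Theorem \ref{th1} \ref{th1-As3}, hence $(x_1,\ldots,x_n)$ is $G_n$-invariant for every $n\geq 4$. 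For the remaining small values of $n$ one simply re-runs the argument `\ref{th1-As3}$\Rightarrow$\ref{th1-As1}' from the proof of Theorem \ref{th1}, which uses only the intertwiner identity $\sum_{\bm j\in[n]^m}\delta_\pi(\bm j)u_{j_1i_1}\cdots u_{j_mi_m}=\delta_\pi(\bm i)\bm 1_{C(G_n)}$ and does not invoke invertibility of the Gram matrix. The implication \textup{(iii)$\Rightarrow$(i)} is obtained identically via Theorem \ref{th1} \ref{th1-As4}, or alternatively by first converting \eqref{eqc1} into \eqref{eqp} using the explicit cumulant-to-moment passage from the proof of Theorem \ref{th1}.

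I do not foresee a serious obstacle here: the content is essentially an $n\to\infty$ packaging of Theorem \ref{th1}. The only point that deserves care is the compatibility of the globally defined $\widetilde\varphi_m,\widetilde\kappa_m$ with the $n$-dependent versions used in Remark \ref{rk1} (ii), and the observation that the forward Weingarten-style step of Theorem \ref{th1} needs $n\geq 4$ while the reverse step (the one we actually invoke for $n$ small) does not.
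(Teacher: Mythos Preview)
Your proposal is correct and follows essentially the same route as the paper: reduce to Theorem \ref{th1} on each truncation $(x_1,\ldots,x_n)$, invoke the M\"obius-inversion formulas of Remark \ref{rk1} (ii) once $n$ is large enough that $D_n(m)=P(m)$, and for the converse read the formulas \eqref{eqp}, \eqref{eqc1} as instances of Theorem \ref{th1} \ref{th1-As3}--\ref{th1-As4}. Your explicit treatment of the case $n<4$ (noting that the implication \ref{th1-As3}$\Rightarrow$\ref{th1-As1} uses only the intertwiner identity and not the invertibility of the Gram matrix) is a point the paper glosses over but is consistent with its argument.
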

\begin{proof}

	If $(x_i)_{i\in \mathbb N}$ is $G$-invariant, then for each $n>0$, the tuple $(x_1,\ldots,x_n)$ is $G_n$-invariant. Let $A_n$ denote the subalgebra of $A$ generated by $(x_1,\ldots,x_n)$ and define $\varphi^{(n)}=\varphi|_{A_n}$. For $m,n\in \mathbb N$,
we retain the notation $D_n(m)$ from Remark \ref{rk1} and similarly define $\wt\varphi^{(n)}_m :D_n(m)\rightarrow \mathbb C$ as the functional determined by $\varphi^{(n)}$.
 If $n\geq 2m$, then $D_n(m)=P(m)$.
	By Theorem \ref{th1}, there exist scalars $c_\pi^{(n,m)}$ and $C_\pi^{(n,m)}$ ($\pi\in \mathcal C(m)$) such that, for any $\sigma\in \mathcal C(m)$,
	\begin{equation}\label{eq11}
		\widetilde\varphi^{(n)}_m(\sigma)=
		\sum\limits_{\substack{\pi\in\mathcal C(m)\\ \pi\leq \sigma}}c_\pi^{(n,m)}
		\quad \text{and}\quad \widetilde\kappa_{m}^{(n)}(\sigma)=
		\sum\limits_{\substack{\pi\in\mathcal C(m)\\ \pi\leq \sigma}}C_\pi^{(n,m)}.
	\end{equation}
	Thus, if $n\geq2m$, by the equivalence between Eq. (\ref{eqi1}) and Eq. (\ref{eqi2}), we have \begin{equation}\label{eq14}
		c_\pi^{(n,m)}=\sum\limits_{\substack{\sigma\in \mathcal C(m)\\ \sigma\leq \pi}}{\widetilde\varphi_m^{(n)}(\sigma)\mu_{\mathcal C(m)}(\sigma,\pi)}
		\quad \text{and} \quad
		C_\pi^{(n,m)}=\sum\limits_{\substack{\sigma\in \mathcal C(m)\\ \sigma\leq \pi}}{\widetilde\kappa_m^{(n)}(\sigma)\mu_{\mathcal C(m)}(\sigma,\pi)}.
	\end{equation} 
	The definition of the functionals $\widetilde\varphi^{(n)}_m$ and $\widetilde\varphi_m$ immediately implies that, for any $\pi\in D_n(m)$,
	 $\widetilde\varphi^{(n)}_m(\pi)=\wt\varphi_m(\pi)$.	
	Together with Eq. (\ref{eq14}), we obtain Eq. (\ref{eqp}) and Eq. (\ref{eqc1}), 
	{which yield Assertions (ii) and (iii).}
	
	Conversely, if the Assertion (ii) (respectively, Assertion (iii)) holds, then for each $m,n>0$, the moments (respectively, the cumulants) of $(x_1,\ldots,x_n)$ is of the form Eq. (\ref{eq11}),
	where 
	$$ c_\pi^{(m)}=\sum\limits_{\substack{\sigma\in \mathcal C(m)\\ \sigma\leq \pi}}{\widetilde\varphi_m(\sigma)\mu_{\mathcal C(m)}(\sigma,\pi)}\quad \text{and}\quad C_\pi^{(m)}=\sum\limits_{\substack{\sigma\in \mathcal C(m)\\ \sigma\leq \pi}}{\widetilde\kappa_m(\sigma)\mu_{\mathcal C(m)}(\sigma,\pi)}.$$
	Thus by Theorem \ref{th1}, the sequence $(x_1, \ldots, x_n)$ is $G_n$-invariant.
\end{proof}


For $\times\in\{o,b,s,h\}$, we denote by $G_n^{\times}$ the free easy quantum groups $O_n^+$, $B_n^+$, $S_n^+$ and $H_n^+$, respectively, and by $\mathcal C^\times$ the category of non-crossing partitions associated with $G_n^\times$. Let $(\mathcal M,\varphi)$ be a $W^*$-probability space, namely, $\mathcal M$ is a von Neumann algebra and $\varphi$ is a faithful normal state ($\varphi$ is not necessarily a trace.) Let $(x_i)_{i \in \mathbb{N}}$ be a sequence of self-adjoint elements in $\mathcal M$.
It is shown in \cite[Theorem 1]{ref5} that, for $\times\in\{o,b,s,h\}$, there exists a von Neumann subalgebra $\mathcal B$ of $\mathcal M$, such that the  $G^{\times}$-invariance of $(x_i)_{i \in \mathbb{N}}$ can be characterized by its operator-valued freeness:
\begin{itemize}
	\item [(a)] If $\times=o$, $(x_i)_{i \in \mathbb{N}}$ is a  $\mathcal B$-valued free semi-circular family with  mean zero and common variance.
	\item [(b)] If $\times=h$, $x_i$, ${i \in \mathbb{N}}$, have even and identical distributions and are free with amalgamation over $\mathcal B$.
	\item [(c)] If $\times=b$, $(x_i)_{i \in \mathbb{N}}$ is a  $\mathcal B$-valued free semi-circular family with common mean and variance.
	\item [(d)] If $\times=s$, $x_i$, ${i \in \mathbb{N}}$, have identical distributions and are free with amalgamation over $\mathcal B$.
\end{itemize}
For convenience,  we will say that $(x_i)_{i \in \mathbb{N}}$ has the $\times$-distribution with respect to $\mathcal B$.
Combining Theorem \ref{th2} with \cite[Theorem 1]{ref5}, we can derive the following equivalent conditions for operator-valued freeness, bypassing the quantum group invariance. 

\begin{cor}\label{coro3}
	Consider the $W^*$-probability space $(\mathcal M, \varphi)$, which is endowed with a faithful state, generated by an infinite sequence of self-adjoint random variables $(x_i)_{i\in \mathbb N}$. Then, for $\times\in\{o,b,s,h\}$, the following are equivalent:
	\begin{itemize}
		\item [(i)] For any $m>0$ and $\bm{i}\in \nat^m$, 
		\begin{equation*}\label{eqc}
			\kappa_{m}(x_{i_1},\ldots ,x_{i_m})=
			\sum\limits_{\pi\in \mathcal C_{\leq}^\times(\bm{i})}\sum\limits_{\substack{\sigma\in \mathcal C(m)\\ \sigma\leq \pi}}{\widetilde\kappa_m(\sigma)\mu_{\mathcal C^\times(m)}(\sigma,\pi)}.
		\end{equation*}
		In particular, $\kappa_{m}(x_{i_1},\ldots ,x_{i_m})$ is zero except if   
		$\mathcal C_{\leq}^\times(\bm{i})\neq \varnothing$.
		\item[(ii)] 
		There is a von Nuemann subalgebra $1\subset \mathcal B\subset \mathcal M$ and a $\varphi$-preserving conditional expectation $E:\mathcal M\rightarrow\mathcal  B$ such that 
		$(x_i)_{i \in \mathbb{N}}$ has the $\times$-distribution with respect to $\mathcal B$.
		\item [(iii)] 
		There is a von Nuemann subalgebra $1\subset \mathcal B\subset \mathcal M$ and a $\varphi$-preserving conditional expectation $E:\mathcal M\rightarrow\mathcal  B$ 
		such that, for any $m>0$,
		there is a family $(b_\pi)_{\pi\in \mathcal C(m)}\subset \mathcal B$ satisfying, for all $\bm{i}\in \nat^m$,
		\begin{equation}\label{eq8}
			E\left[x_{i_1}\cdots x_{i_m}\right]=
			\sum\limits_{\pi\in\mathcal C^\times_{\leq}(\bm{i})}b_\pi. 
		\end{equation}
		In particular, $E(x_{i_1}\cdots x_{i_m})$ are zero except if 
		$\mathcal C_{\leq}^\times(\bm{i})\neq \varnothing$ and $E(x_{i_1}\cdots x_{i_m})=E(x_{j_1}\cdots x_{j_m})$ as soon as $\ker(\bm{i})=\ker(\bm{j})$.
	\end{itemize}
\end{cor}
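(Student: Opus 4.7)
The strategy is to connect all three conditions through the $G$-invariance of the sequence $(x_i)_{i\in\mathbb N}$, which Theorem~\ref{th2} and \cite[Theorem~1]{ref5} already tie to cumulant formulas and to the operator-valued $\times$-distribution, respectively.

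First, condition (i) is precisely Theorem~\ref{th2}(iii), so (i) is equivalent to $G$-invariance of $(x_i)_{i\in\mathbb N}$. Second, by \cite[Theorem~1]{ref5}, $G^\times$-invariance is equivalent to the existence of a von Neumann subalgebra $\mathcal B$ and a $\varphi$-preserving conditional expectation $E:\mathcal M\rightarrow\mathcal B$ with respect to which $(x_i)_{i\in\mathbb N}$ has the $\times$-distribution; this is precisely (ii). Thus (i) $\Leftrightarrow$ (ii), and it remains to connect (iii) to the same hub by proving (ii) $\Rightarrow$ (iii) and (iii) $\Rightarrow$ (i).

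For (ii) $\Rightarrow$ (iii), assume the $\times$-distribution and expand $E(x_{i_1}\cdots x_{i_m})$ via the operator-valued moment-cumulant formula \eqref{Eopmc}. In each of the four cases the operator-valued cumulants $\kappa_m^E(x_{i_1},\ldots,x_{i_m})$ vanish unless every block of $\ker(\bm{i})$ has an admissible size (pairs for $\times\in\{o,b\}$, even-size blocks for $\times=h$, arbitrary blocks for $\times=s$), and by the identical-distribution hypothesis they depend only on $\ker(\bm{i})$. Hence the moment sum collapses to $E(x_{i_1}\cdots x_{i_m}) = \sum_{\pi\in\mathcal C^\times_\leq(\bm{i})} b_\pi$, where $b_\pi\in\mathcal B$ is the common value of $\kappa_\pi^E$ on any representative sequence of kernel $\pi$; this is \eqref{eq8}. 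Conversely, for (iii) $\Rightarrow$ (i), apply $\varphi$ to both sides of \eqref{eq8}: since $\varphi\circ E=\varphi$, we obtain
\begin{equation*}
\varphi(x_{i_1}\cdots x_{i_m}) = \sum_{\pi\in\mathcal C^\times_\leq(\bm{i})} \varphi(b_\pi),
\end{equation*}
which is exactly the scalar condition in Theorem~\ref{th1}(iii) with $c_\pi^{(m)}=\varphi(b_\pi)$. Thus each finite tuple $(x_1,\ldots,x_n)$ with $n\ge 4$ is $G_n^\times$-invariant, so $(x_i)_{i\in\mathbb N}$ is $G$-invariant, and Theorem~\ref{th2} yields (i).

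The main obstacle is the direction (ii) $\Rightarrow$ (iii): the identification of $b_\pi$ as an element of $\mathcal B$ depending only on $\pi$ requires the identical-distribution property built into each $\times$-case, and the vanishing of mixed cumulants outside $\mathcal C^\times$ must be verified for each of the four settings separately (pairings for the semicircular cases $o,b$, even-block non-crossing partitions for the $H_n^+$ case, and all non-crossing partitions for the $S_n^+$ case). Once this case analysis is carried out, the remaining implications are routine manipulations of the moment-cumulant formulas combined with the two deep inputs, Theorem~\ref{th2} and \cite[Theorem~1]{ref5}, already at our disposal.
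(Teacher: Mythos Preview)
Your proposal is correct and follows essentially the same route as the paper: both prove (i)$\Leftrightarrow$(ii) by combining Theorem~\ref{th2} with \cite[Theorem~1]{ref5}, derive (ii)$\Rightarrow$(iii) from the operator-valued moment--cumulant formula together with vanishing of mixed $E$-cumulants and identical distribution, and close the loop (iii)$\Rightarrow$(i) by applying $\varphi$ and invoking the finite/infinite de Finetti theorems. The only cosmetic difference is that the paper packages your ``case analysis'' uniformly via the category $\mathcal C^\times$ (the vanishing condition is simply $\pi\in\mathcal C^\times_\leq(\bm i)$, and one sets $b_\pi=\kappa_\pi^E(x_1,\ldots,x_1)$), so no separate treatment of the four cases is needed; also note that the admissible-size constraint is on the blocks of $\pi$, not of $\ker(\bm i)$.
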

\begin{proof}
	 `(i)$\Leftrightarrow$(ii)': By Theorem \ref{th2} and \cite[Theorem 1]{ref5}, we have Assertions (i) and  (ii) are both equivalent to the $G^\times$-invariance of $(x_i)_{i\in \mathbb N}$.
	 
	`(ii)$\Rightarrow$(iii)': 
	It is well known that Assertion (ii) holds if and only if, 
	for all $m>0 $, $\pi\in\C^\times(m)$ and $b_i\in B$, $i=0,1,\ldots,m$,
	$$\kappa^E_{\pi}[b_0x_{i_1}b_1,\ldots,x_{i_m}b_m]=0$$
	except when $\pi\in \C_\leq^\times(\bm{i})$. 
	Then it follows from the operator-valued moment-cumulant formula \eqref{Eopmc} that
 $$\begin{aligned}
		E\left[x_{i_1}\cdots x_{i_m}\right]&=
	\sum\limits_{\pi\in\mathcal C^\times_{\leq}(\bm{i})}\kappa^{E}_\pi(x_{i_1},\cdots,x_{i_m})\\
	&=\sum\limits_{\pi\in\mathcal C^\times_{\leq}(\bm{i})}\kappa^{E}_\pi(x_{1},\cdots,x_{1}),
	\end{aligned}$$ 
	where the second equality holds because the $\mathcal B$-valued distribution of $x_i$ ($i\in \N$) are identical.
	Since each $\kappa^{E}_\pi(x_{1},\cdots,x_1)$ lies in $\mathcal B$,
	we may define 
	$b_\pi=\kappa^{E}_\pi(x_{1},\cdots,x_1)$,
	 which  implies Assertion (iii).
		
	`(iii)$\Rightarrow$(i)': Applying $\varphi$ to both sides of (\ref{eq8}), 
	we have 
	$$
	\varphi(x_{i_1}\ldots x_{i_m})=
	\sum\limits_{\pi\in\mathcal C^\times_\leq(\bm{i})}\varphi(b_\pi).
	$$
	By Remark \ref{rk1}, we have that $\varphi(x_{i_1}\ldots x_{i_m})$ is of the form (\ref{eqp}),
	namely, 
	\begin{equation*}
		\varphi(x_{i_1}\cdots x_{i_m})=
		\sum\limits_{\pi\in \mathcal C_{\leq}^\times(\bm{i})}\sum\limits_{\substack{\sigma\in \mathcal C^\times(m)\\ \sigma\leq \pi}}{\widetilde\varphi_m(\sigma)\mu_{\mathcal C^\times(m)}(\sigma,\pi)}.
	\end{equation*}
	Thus Assertion (i) follows immediately from Theorem \ref{th2}.
\end{proof}

\begin{rk}\label{rk3}
	Let $n$ be an integer and let $A_u(n)$ denote the universal $C^*$-algebra generated by $n^2$ elements $(v_{ij})_{1\leq i,j\leq n}$ such that 
	both $v$ and $v^t$ are unitary. The pair $(A_u(n),v)$ is known as the  quantum unitary group and is commonly denoted by $U_n^+$. We refer to \cite{ref8} for details. The quantum unitary group $U_n^+$ can be associated with the category of non-crossing pair coloured partitions, denoted by $\operatorname{NC}_2^{\circ\bullet}$. We refer the reader to  \cite{bk3} to further more about coloured partition, to \cite{ref15} for the coaction of $U_n^+$ on $\ast$-algebra and to \cite{ref14} for the Haar state with respect to $U_n^+$.
	Theorems \ref{th1} and Theorem \ref{th2} can be extended verbatim to a unitary version by simply replacing $G_n$ with $U_n^+$ and $\operatorname{NC}_\times(m)$ with $\operatorname{NC}_2^{\circ\bullet}$.
\end{rk} 

\bigskip \n{\bf Acknowledgements.}  The author is indebted to his advisor Simeng Wang for invaluable discussions, meticulous feedback, and patient guidance throughout this work. Special thanks also go to Sheng Yin  for his insightful suggestions and constant inspiration. This work is partially supported by the NSF of China (No. 12301161, No. 12031004, No. W2441002)


 \bibliographystyle{plain}
\bibliography{reference}


\end{document}